\def\currentvolume{16}
\def\curryr{09}
\def\currentnumber{2}
\def\currentstartpage{1}
\def\currentendpage{}
\def\abstractname{\bf Abstract}
\newtheorem{lemma}{\indent Lemma}[section]
\newtheorem{proposition}[lemma]{\indent Proposition}
\newtheorem{theorem}[lemma]{\indent Theorem}
\theoremstyle{definition}
\theoremstyle{remark}
\newtheorem{remark}[lemma]{\indent Remark}
\newcommand{\Q}{\mathbb{Q}}
\newcommand{\C}{\mathbb{C}}
\newcommand{\Z}{\mathbb{Z}}
\newcommand{\GL}{\mathop{\mathrm{GL}}}
\newcommand{\Ind}{\mathop{\mathrm{Ind}}}
\newcommand{\rank}{\mathop{\mathrm{rank}}}
\newcommand{\ad}{\mathop{\mathrm{ad}}}
\newcommand{\g}{\mathop{\mathfrak{g}}}
\newcommand{\p}{\mathop{\mathfrak{p}}}
\newcommand{\ml}{\mathop{\mathfrak{l}}}
\newcommand{\n}{\mathop{\mathfrak{n}}}
\newcommand{\h}{\mathop{\mathfrak{h}}}
\newcommand{\uu}{\mathop{\mathfrak{u}}}
\newcommand{\gl}{\mathop{\mathfrak{gl}}}
\newcommand{\dd}{{\mathop{\mathrm{d}}}}
\newcommand{\D}{\widetilde{D}}
\newcommand{\ssl}{\mathop{\mathfrak{sl}}}
\newcommand{\s}{\mathop{\mathfrak{s}}}
\newcommand{\lab}[1]{\footnotesize{#1}}
\newcommand{\nm}[1]{\footnotesize{#1}}
\begin{document}

\title[Induced nilpotent orbits]{Induced Nilpotent Orbits of the Simple Lie Algebras of Exceptional Type}

\author[W. A. de Graaf]{Willem A. de Graaf}

\author[A. Elashvili]{Alexander Elashvili}

\maketitle

\hfill {\it Dedicated to Nodar Berikashvili}

\hfill {\it on the occasion of his 80th birthday}

\begin{abstract}
We describe algorithms for computing the induced nilpotent orbits in
semisimple Lie algebras. We use them to obtain the induction tables for
the Lie algebras of exceptional type. This also yields the classification of
the rigid nilpotent orbits in those Lie algebras.
\bigskip

\noindent
{\bf 2000 Mathematics Subject Classification:} 17B10.

\noindent
{\bf Key words and phrases:} Simple Lie algebras, exceptional type, nilpotent orbit.
\end{abstract}

\section{Introduction}

Let $\g$ be a simple complex Lie algebra, and let $G$ be a connected algebraic
group with Lie algebra $\g$. Then $G$ acts on $\g$, and a natural question is
what the $G$-orbits in $\g$ are.
The nilpotent $G$-orbits in $\g$ have been studied in great detail
(see for example \cite{colmcgov}). They have been
classified in terms of so-called weighted Dynkin diagrams. In \cite{lusp}
the notion of {\em induced nilpotent orbit} was introduced. Let $\p\subset \g$
be a parabolic subalgebra with Levi decomposition $\p = \ml\oplus \n$, where
$\n$ is the nilradical. Let $\mathcal{O}_{\ml}$ be a nilpotent orbit in $\ml$.
Then in \cite{lusp} it is shown that there is a unique nilpotent orbit
$Ge\subset \g$ such that $Ge \cap (\mathcal{O}_{\ml} \oplus \n)$ is dense
in $\mathcal{O}_{\ml} \oplus \n$. The orbit $Ge$ is said to be induced from
$\mathcal{O}_{\ml}$.

Naturally
this led to the question which nilpotent orbits are induced, and which are not.
For the simple Lie algebras of classical type this question was treated
by Spaltenstein (\cite{spaltenstein}) and later
by Kempken (\cite{kempken}).
The same problem for the exceptional types was first solved
by Elashvili. Elashvili (exceptional case) and Spaltenstein (classical case)
announced these results in a joint talk at the 1979 Oberwolfach conference on
Transformation Groups and Invariant Theory. Later (\cite{elashvili}, see also
\cite{spaltenstein}) Elashvili has published tables which, for the Lie algebras
of exceptional type, list for each induced nilpotent orbit
exactly from which data it is induced (a Levi subalgebra, and a nilpotent
orbit in it). We call these lists {\em induction tables}.

It is the objective of this paper to give algorithms that compute the induction
table for a given semisimple Lie algebra (Sections \ref{sect:prelim},
\ref{sect:induce}). We have implemented these algorithms
in the computer algebra system {\sf GAP}4 (\cite{gap4}).
Using this we recomputed Elashvili's tables (and fortunately this confirmed
their correctness). They are given in
Section \ref{sect:tables}. This serves two purposes. Firstly, these
computations constitute an independent check of the correctness of the
tables. Secondly, it is our objective to make the tables more easily available.
A new feature of our tables is that they contain a representative for each
induced orbit. That is a nilpotent element with two properties: it lies in a
particular subalgebra (denoted $\uu(\widetilde{D})$, see Section
\ref{sect:induce}) of the parabolic subalgebra associated with the induction,
and it is a representative of the induced nilpotent orbit.
Also the validity of these representatives has been checked by computer.

\section{Preliminaries}\label{sect:prelim}

In this section we recall some notions from the literature. Secondly, we
describe some basic algorithms that we use, and that we believe to be of
independent interest. Our computational set up is as in \cite{gra6}. In
particular, $\g$ will be a simple complex Lie algebra given by a multiplication
table relative to a Chevalley basis. This means that all structure constants
are integers. Therefore, all computations will take place over the base field
$\Q$.

\subsection{Finding $\ssl_2$-triples}

Let $e\in \g$ be a nilpotent element. Then by the Jacobson-Morozov lemma
(cf. \cite{jac}) there are $f,h\in \g$ with $[h,e]=2e$, $[h,y]=-2y$,
$[e,f]=h$. The triple $(h,e,f)$ is said to be an $\ssl_2$-triple.
The proof of the Jacobson-Morozov lemma in \cite{jac} translates to
a straightforward algorithm to find such a triple containing a given
nilpotent element $e$, which takes the following steps:

\begin{enumerate}
\item By solving a system of non-homogeneous linear equations we can find
$z,h\in \g$ with $[e,z]=h$ and $[h,e]=2e$.
\item Set $R= C_{\g}(e)$, the centralizer of $e$ in $\g$. Then the map
$\ad h +2 : R\to R$ is non-singular; hence there exists $u_1\in R$ with
$(\ad h +2 )(u_1)=u_0$, where $u_0 = [h,z]+2z$. We find $u_1$ by solving
a non-homogeneous system of linear equations.
\item Set $f=z-u_1$; then $(h,e,f)$ is a $\ssl_2$-triple.
\end{enumerate}

For a proof of the following theorem we refer to \cite{colmcgov},
Chapter 3.

\begin{theorem}\label{thm:class}
Let $e_1,e_2\in \g$ be two nilpotent elements lying in $\ssl_2$-triples
$(h_1,e_1,f_1)$ and $(h_2,e_2,f_2)$. Then $e_1$ and $e_2$ are $G$-conjugate
if and only if the two $\ssl_2$-triples are $G$-conjugate, if and only
if $h_1$ and $h_2$ are $G$-conjugate.
\end{theorem}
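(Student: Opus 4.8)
The plan is to read the statement as the equivalence of three conditions attached to the two chosen triples: (A) the nilpotent elements $e_1,e_2$ are $G$-conjugate; (B) the triples $(h_1,e_1,f_1)$ and $(h_2,e_2,f_2)$ are $G$-conjugate; and (C) the neutral elements $h_1,h_2$ are $G$-conjugate. Two implications are immediate, since a single $g\in G$ carrying one triple onto the other carries $e_1$ to $e_2$ and $h_1$ to $h_2$: thus (B)$\Rightarrow$(A) and (B)$\Rightarrow$(C). It therefore suffices to establish (A)$\Rightarrow$(B) and (C)$\Rightarrow$(B). A useful preliminary reduction is that a triple is determined by its neutral and nilpositive parts: if $(h,e,f)$ and $(h,e,f')$ are triples, then $x=f-f'$ satisfies $[e,x]=0$ and $[h,x]=-2x$, so $x\in C_{\g}(e)$ has $\ad h$-eigenvalue $-2$; but $\ad h$ acts on $C_{\g}(e)$ with nonnegative eigenvalues only (every element of $C_{\g}(e)$ is a highest-weight vector for the $\ssl_2$ spanned by the triple), forcing $x=0$. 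Hence to prove (B) it is enough to produce $g\in G$ matching the pairs $(h_1,e_1)$ and $(h_2,e_2)$.

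For (A)$\Rightarrow$(B), suppose $g\cdot e_1=e_2$. Then $(g\cdot h_1,\,e_2,\,g\cdot f_1)$ is an $\ssl_2$-triple whose nilpositive element is again $e_2$, and I invoke Kostant's conjugacy theorem: any two $\ssl_2$-triples sharing the nilpositive element $e$ are conjugate under $C_G(e)$. I would prove this by noting that the difference of the two neutral elements lies in $C_{\g}(e)$, that $\ad h$ grades $C_{\g}(e)=\bigoplus_{j\ge 0}C_{\g}(e)_j$ into nonnegative weight spaces, and that the neutral element can be corrected degree by degree using $\exp$ of elements of the strictly positive part $C_{\g}(e)_{>0}$, all of which exponentiate inside the unipotent radical of $C_G(e)$. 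Applying this with $e=e_2$ yields $g'\in C_G(e_2)$ with $g'\cdot(g\cdot h_1)=h_2$; then $g'g$ matches both $e$ and $h$, and by the preliminary reduction it conjugates the whole of the first triple to the second, giving (B).

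For (C)$\Rightarrow$(B), suppose $g\cdot h_1=h_2$; then $(h_2,\,g\cdot e_1,\,g\cdot f_1)$ is a triple with neutral element $h_2$, so I am reduced to the statement that any two $\ssl_2$-triples sharing the neutral element $h$ are conjugate under $C_G(h)$, and this is the main obstacle. Here I would use the eigenspace decomposition $\g=\bigoplus_{j}\g(j)$, where $\g(j)=\{x\in\g:[h,x]=jx\}$, so that every admissible nilpositive element lies in $\g(2)$, while the connected centralizer $C_G(h)^{\circ}$ has Lie algebra $\g(0)$. The crux is to show, from the representation theory of the $\ssl_2$ attached to a triple $(h,e,f)$, that $\ad e\colon\g(0)\to\g(2)$ is surjective, i.e.\ $[\g(0),e]=\g(2)$; this makes the orbit $C_G(h)^{\circ}\cdot e$ open in $\g(2)$. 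Since this surjectivity holds for every nilpositive element completing $h$, each such orbit is open in the irreducible variety $\g(2)$, and two open orbits must meet and hence coincide, so the admissible nilpositive elements form a single $C_G(h)$-orbit. Thus some $g''\in C_G(h_2)$ satisfies $g''\cdot(g\cdot e_1)=e_2$; as $g''g$ now matches both $h$ and $e$, the preliminary reduction again matches the full triples, establishing (B). Combining the two directions yields (A)$\Leftrightarrow$(B)$\Leftrightarrow$(C), as required.
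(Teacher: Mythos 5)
Your proof is correct and is, in substance, the paper's own: the paper gives no proof of this theorem, referring instead to Collingwood--McGovern, Chapter 3, and your reduction to the two conjugacy theorems proved there --- Kostant's (triples sharing the nilpositive element $e$ are $C_G(e)$-conjugate, via degree-by-degree correction in $C_{\g}(e)$) and Mal'cev's (triples sharing the neutral element $h$ are $C_G(h)$-conjugate, via the open-orbit argument in the irreducible variety $\g(h,2)$) --- together with the observation that a triple is determined by $(h,e)$, is exactly that route. The only point worth making explicit in your sketch of the Kostant step is that the difference $z$ of the two neutral elements lies not merely in $C_{\g}(e)$ but in $[e,\g]\cap C_{\g}(e)=\bigoplus_{j>0}C_{\g}(e)_j$ (since $z=[e,f'-f]$), which is what guarantees that the correction by exponentials of strictly positive-degree elements can account for all of $z$.
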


\begin{remark}
Of course the elements $z,h$ found in Step (1) of the algorithm
are not necessarily unique.
Indeed, let $u\in C_{\g}(e)\cap [e,\g]$, and let $v\in \g$ be such that
$[e,v]=u$. Then $z' = z+v$, $h'=h+u$ is also a solution. However, because of
Theorem \ref{thm:class}, this non-uniqueness does not lead to problems.
\end{remark}

\subsection{The weighted Dynkin diagram}\label{sect:wd}

Let $e\in \g$ be a nilpotent element lying in an $\ssl_2$-triple $(h,e,f)$.
Then by the representation theory of $\ssl_2$
we get a direct sum decomposition $\g = \oplus_{k\in \Z} \g(h,k)$, where
$\g(h,k) = \{ x\in \g \mid [h,x]=kx \}$.
Fix a Cartan subalgebra $H$ of $\g$ with
$h\in H$. Let $\Phi$ be the corresponding root system of $\g$.
For $\alpha\in \Phi$ we let $x_\alpha$ be a corresponding root vector. For
each $\alpha$ there is a $k\in \Z$ with $x_\alpha\in \g(h,k)$. We write
$\eta(\alpha)=k$. It can be shown that there exists a basis of simple roots
$\Delta\subset \Phi$ such that
$\eta(\alpha)\geq 0$ for all $\alpha\in\Delta$. Furthermore, for such a
$\Delta$ we have $\eta(\alpha)\in \{0,1,2\}$ for all $\alpha\in\Delta$
(see \cite{cart}). Write $\Delta = \{\alpha_1,
\ldots,\alpha_l\}$. Then the Dynkin diagram of $\Phi$ has $l$ nodes, the
$i$-th node corresponding to $\alpha_i$. Now to each node we add the label
$\eta(\alpha_i)$; the result is called the weighted Dynkin diagram. It
depends only on $e$, and not on the choice of
$\ssl_2$-triple containing $e$. Furthermore, it completely identifies the
nilpotent orbit $Ge$. In other words, $e$ and $e'$ ly in the same $G$-orbit
if and only if they have the same weighted Dynkin diagrams. \par
It is possible to formulate an algorithm for computing the weighted Dynkin
diagram of a given nilpotent $e\in \g$. However, computing the set of roots
relative to a Cartan subalgebra $H$ will in many cases be rather time
consuming, and in the worst cases even prove to be infeasible (for example
if $H$ is not split over the rationals, and the construction of field
extensions is required). Therefore we consider a
different approach. Let $e\in\g$ be nilpotent, and let $(h,e,f)$ be an
$\ssl_2$-triple. We consider the direct sum decomposition $\g \oplus_{k\in \Z} \g(h,k)$; and form the sequence
$s(e)=(\dim \g(h,0), \dim \g(h,1),\ldots, \dim \g(h,m) )$, where $m$ is maximal
with $\dim \g(h,m)\neq 0$. We note that it does not depend on the choice of
$h$ (indeed, if $(h',e,f')$ is an $\ssl_2$-triple, then $h$ and $h'$ are
$G$-conjugate by Theorem \ref{thm:class}).
Secondly, for all $e'\in Ge$ we have $s(e') = s(e)$. In other
words, $s(e)$ only depends on the $G$-orbit of $e$. We call $s(e)$ the
signature of the orbit $Ge$.

\begin{proposition}\label{prop:sig}
Let $\g$ be of exceptional type. Then all nilpotent orbits in $\g$ have
a different signature.
\end{proposition}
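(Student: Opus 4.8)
The plan is to verify this by a finite, case-by-case computation, since the statement concerns the exceptional Lie algebras, of which there are only five ($G_2$, $F_4$, $E_6$, $E_7$, $E_8$), and each has only finitely many nilpotent orbits (the number ranges from $5$ for $G_2$ up to $70$ for $E_8$). These orbits are already classified via their weighted Dynkin diagrams, as recalled in Section \ref{sect:wd}, so the objects to be compared are enumerated in advance. The signature $s(e) = (\dim \g(h,0), \dim \g(h,1), \ldots)$ is a $G$-orbit invariant, as established just above the statement, so it suffices to compute one signature per orbit and check that no two coincide.

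The key steps I would carry out are as follows. First, for each exceptional $\g$, take the known list of nilpotent orbits, each represented by a weighted Dynkin diagram, equivalently by a characteristic $h$ in the chosen Cartan subalgebra $H$. Second, for each such $h$, compute the eigenvalue multiplicities $\dim \g(h,k)$ for $k \geq 0$; this is immediate from the root-space decomposition, since $\dim \g(h,k)$ equals the number of roots $\alpha$ with $\eta(\alpha) = k$ (plus the rank when $k=0$, to account for $H$). These multiplicities are determined directly by the labels on the weighted Dynkin diagram, so no $\ssl_2$-triple need actually be constructed. Third, form the tuple $s(e)$ for each orbit and sort the resulting finite list of tuples, checking that all entries are pairwise distinct.

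The main obstacle is simply the bookkeeping: this is not a proof one carries out by a clever argument but by an exhaustive enumeration, and its correctness rests entirely on the completeness and accuracy of the underlying list of weighted Dynkin diagrams together with the eigenvalue counts derived from them. The natural way to discharge this reliably is by computer, which fits the paper's stated methodology of implementing these routines in {\sf GAP}4. I expect the verification itself to present no conceptual difficulty once the orbit data are loaded; the only place where a genuine collision could conceivably arise is between two orbits of the same dimension (equivalently, the same $\sum_{k \geq 1} \dim \g(h,k)$), so a sensible sanity check is to group orbits by dimension first and then confirm that within each dimension class the finer signature data still separate them. Should any such pair fail to be separated by $s(e)$, the proposition would be false; the content of the statement is precisely that this never happens for the exceptional types.
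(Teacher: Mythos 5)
Your proposal is correct and coincides with the paper's own primary proof, which is exactly a straightforward computer verification, computing each orbit's signature from its weighted Dynkin diagram via the root-space decomposition and checking pairwise distinctness. The paper additionally sketches a conceptual alternative --- the signature determines the character of the $\ssl_2$-triple on $\g$, hence the Jordan block sizes of $\ad_{\g}(e)$, which by a result of Lawther already separate the nilpotent orbits in exceptional type --- but your exhaustive enumeration is the argument the authors actually rely on.
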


\begin{proof}
We have verified this statement by a straightforward computer calculation.
A more conceptual argument goes as follows. We note that knowing the signature
of the orbit $Ge$ amounts to knowing the character of the $\ssl_2$-triple
$(h,e,f)$ on the module $\g$. Hence the signature determines the sizes of the
Jordan blocks of the adjoint map $\ad_{\g}(e)$. Now, it is known that each
nilpotent orbit gives rise to a different set of sizes of Jordan blocks
(cf. \cite{lawther}).
\end{proof}

Using Proposition \ref{prop:sig} we
compute weighted Dynkin diagrams in the following way.
We first compute the list of all weighted Dynkin diagrams, with their
corresponding signatures. Then, in order to compute the weighted Dynkin
diagram of a nilpotent element, we compute an $\ssl_2$-triple containing it,
compute the corresponding signature, and look it up in the list. Of course,
the table of signatures is computed only once, and then stored.

We remark that it is straightforward to compute the signature
of a nilpotent orbit, given its weighted Dynkin diagram. Indeed, we form
the vector $\bar{w} =( w_1,\ldots,w_l)$, where $w_i$ is the label corresponding
to $\alpha_i$ in the weighted Dynkin diagram. We write a root $\alpha$ of
$\Phi$ as a linear combination of simple roots
$\alpha = \sum_i a_i \alpha_i$. Let $\bar{a}=(a_1,\ldots,a_l)$.
Then the root space $\g_\alpha$ is contained in $\g(h,k)$
if and only if the inner product  $\bar{w}\cdot \bar{a}$ equals $k$.

\begin{remark}
We can prove the statement of Proposition \ref{prop:sig} also for simple
Lie algebras of type $A_n$ and $C_n$. For type $C_n$ the argument goes as
follows. Here $\g$ is isomorphic to $\mathfrak{sp}_{2n}(\C)$. Let $V$ be
its natural module. Then the adjoint module is isomorphic to the symmetric
square $S^2(V)$. Let $\s$ denote the subalgebra of $\g$ spanned by the
$\ssl_2$-triple $(h,e,f)$. Let $\chi_V(x)$ denote the character of $\s$
on $V$. Then
$$\chi_V(x) = \sum_{k\in \Z} n_k x^k, $$
where $n_k$ is the dimension of the weight space with weight $k$.
We have
$$2\chi_{S^2(V)}(x) = \chi_V(x)^2 + \chi_V(x^2).$$
From this it follows that from the knowledge of the character of
$\s$ on $S^2(V)$ we can recover the character of $\s$ on $V$. But that
determines the $G$-conjugacy class of $h$, and hence the nilpotent orbit
$Ge$. The proof for $A_n$ is simpler, as here $\g$ is isomorphic to
$\ssl_{n+1}(\C)$. In this case, from the character of
$\s$ on $\g$ we directly get the character of $\s$ on $V\otimes V^*$. From
the latter character we recover the character of $\s$ on $V$.\par
However, the statement of the proposition does not hold for $\g$ of
type $D_n$. First of all, if the weighted Dynkin diagrams of two nilpotent
orbits can be transformed into each other by a diagram automorphism, then
the two orbits will have the same signature. Secondly, also in other cases
two different orbits can have the same signature. Indeed, consider the
Lie algebra of type $D_{64}$, and the nilpotent orbits corresponding
to the partitions  $(5,3^{27},1^{42})$ and $(4^8,2^{48})$ (we refer to 
\cite{colmcgov} for an account of the parametrization of nilpotent orbits
by partitions). These two orbits have ``essentially'' different weighted
Dynkin diagrams (that is, they cannot be transformed into each other
by a diagram automorphism), and also have the same signature. We are grateful
to Oksana Yakimova for indicating this example to us.
\end{remark}

\subsection{Levi and parabolic subalgebras}\label{sect:levi}

Let $\Phi$ denote the root system of $\g$, with set of simple roots
$\Delta$. For $\alpha\in\Phi$ let $\g_\alpha$ denote the corresponding
root space, spanned by the root vector $x_\alpha$. \par
A subalgebra $\p$ of $\g$ is said to be parabolic if it contains a
Borel subalgebra (i.e., a maximal solvable subalgebra). Let $\Pi\subset
\Delta$, and let $\p_\Pi$ be the subalgebra of $\g$ generated by
$\h$, $\g_\alpha$ for $\alpha>0$ and $\g_{-\alpha}$, for $\alpha\in \Pi$.
Then $\p_\Pi$ is a parabolic subalgebra. Furthermore, any parabolic
subalgebra of $\g$ is $G$-conjugate to a subalgebra of the form $\p_\Pi$.
These $\p_\Pi$ are called standard parabolic subslgabras. \par
Let $\Psi\subset \Phi$ be the root subsystem generated by $\Pi$, and
let $\p=\p_\Pi$. Then $\p = \ml\oplus \n$,
where $\ml$ is spanned by $\h$ along with $\g_\alpha$ for $\alpha\in \Psi$,
and $\n$ is spanned by $\g_\alpha$ for $\alpha\in \Phi$ such that
$\alpha >0$ and $\alpha\not\in \Psi$. The subalgebra $\ml$ is reductive,
and it is called a (standard) Levi subalgebra. \par
Let $\ml_1$, $\ml_2$ be two Levi subalgebras, of standard parabolic subalgebra
corresponding to $\Pi_1,\Pi_2\subset \Delta$. They may or may not be
$G$-conjugate (even if they are isomorphic as abstract Lie algebras, they
may not be $G$-conjugate). We use the following algorithm to decide
whether they are conjugate:

\begin{enumerate}
\item For $i=1,2$ set
$$u_i = \sum_{ \alpha\in \Pi_i} x_{\alpha}.$$
\item Compute the weighted Dynkin diagrams $D_i$ of $u_i$.
\item If $D_1=D_2$ then $\ml_1$ and $\ml_2$ are $G$-conjugate, otherwise
they are not.
\end{enumerate}

\begin{lemma}
The previous algorithm is correct.
\end{lemma}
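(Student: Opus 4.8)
The plan is to reduce the conjugacy of the two Levi subalgebras to the conjugacy of the nilpotent elements $u_1,u_2$, and then to use the material of Section \ref{sect:wd} to replace conjugacy of nilpotent elements by equality of their weighted Dynkin diagrams. The basic observation is that $u_i=\sum_{\alpha\in\Pi_i}x_\alpha$ is a regular (principal) nilpotent element of the semisimple part of the reductive algebra $\ml_i$; this is the classical fact that the sum of the simple root vectors is principal nilpotent. Thus each $u_i$ represents the principal nilpotent orbit of $\ml_i$, and everything hinges on how this orbit, viewed inside $\g$, remembers $\ml_i$.

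For the implication ``$\ml_1,\ml_2$ conjugate $\Rightarrow D_1=D_2$'' I would argue as follows. Suppose $\ad(g)\ml_1=\ml_2$ for some $g\in G$. Since $\ad(g)$ restricts to a Lie algebra isomorphism $\ml_1\to\ml_2$, it sends the principal nilpotent $u_1$ to a principal nilpotent element of $\ml_2$. All principal nilpotent elements of $\ml_2$ form a single orbit under the connected subgroup of $G$ with Lie algebra $\ml_2$, so $\ad(g)u_1$ and $u_2$ are conjugate by an element of that subgroup, and in particular $G$-conjugate. Hence $u_1$ and $u_2$ are $G$-conjugate, and therefore have the same weighted Dynkin diagram by the discussion in Section \ref{sect:wd}.

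For the converse ``$D_1=D_2\Rightarrow\ml_1,\ml_2$ conjugate'' I would invoke the Bala--Carter correspondence (see \cite{colmcgov}), which is a bijection between nilpotent $G$-orbits in $\g$ and $G$-conjugacy classes of pairs $(\ml,\mathcal{O})$ with $\ml$ a Levi subalgebra and $\mathcal{O}$ a distinguished nilpotent orbit in $\ml$. A principal nilpotent orbit is distinguished, so the pair attached to $u_i$ is precisely $\ml_i$ together with its principal orbit. If $D_1=D_2$ then, again by Section \ref{sect:wd}, $u_1$ and $u_2$ lie in the same $G$-orbit; the associated Bala--Carter pairs then coincide, which forces $\ml_1$ and $\ml_2$ to be $G$-conjugate.

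Combining the two implications with the equivalence $u_1\sim u_2\Leftrightarrow D_1=D_2$ gives the asserted correctness of the algorithm. The only genuinely nontrivial step is the converse: it is exactly there that one must recover the $G$-conjugacy class of a Levi subalgebra from the $G$-orbit of its principal nilpotent element, and this is where the Bala--Carter theory, rather than elementary manipulations, is needed.
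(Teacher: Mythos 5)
Your proof is correct, and its skeleton coincides with the paper's: both reduce conjugacy of the Levi subalgebras to conjugacy of the principal nilpotents $u_i$ (detected via weighted Dynkin diagrams and Theorem \ref{thm:class}), both note that the forward direction is easy, and both rest the converse on the fact that the principal orbit of $\ml_i$ is distinguished. The only real divergence is the key citation for the converse. You invoke the injectivity of the full Bala--Carter correspondence between nilpotent $G$-orbits and $G$-classes of pairs (Levi, distinguished orbit), concluding that equal orbits force conjugate pairs. The paper instead unwinds ``distinguished'' into ``$\ml_i$ is a minimal Levi subalgebra containing $u_i$'' and then applies the more elementary statement that all minimal Levi subalgebras containing a fixed nilpotent element are $G$-conjugate (\cite{colmcgov}, Theorem 8.1.1); concretely, if $gu_1=u_2$ then $g\ml_1$ and $\ml_2$ are both minimal Levis containing $u_2$, hence conjugate. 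These are not independent routes: Theorem 8.1.1 is precisely the ingredient behind the injectivity half of Bala--Carter, so your argument uses a strictly stronger packaged theorem where the paper uses its underlying lemma. What the paper's version buys is a lighter dependency (no need for the existence/uniqueness of the whole Bala--Carter datum); what yours buys is brevity of statement, at the cost of quoting a bigger theorem. Either is acceptable as a proof of correctness of the algorithm.
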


\begin{proof}
The $u_i$ are representatives of the principal nilpotent orbit of
$\ml_i$ (\cite{colmcgov}, proof of Theorem 4.1.6). This orbit is
distinguished (this follows from the cited proof, along with
\cite{colmcgov}, Lemma 8.2.1). In other words, $\ml_i$ is the
minimal Levi subalgebra of $\g$ containing $u_i$.  Now suppose that
$D_1=D_2$. Then there is a $g\in G$ with $gu_1=u_2$. So $g\ml_1$ is a
minimal Levi subalgebra containing $u_2$. Hence, by \cite{colmcgov},
Theorem 8.1.1,
there is a $g'\in G$ with $g'g\ml_1 = \ml_2$. The reverse direction is
trivial.
\end{proof}

\section{Induced Nilpotent Orbits}\label{sect:induce}

Let $\p\subset \g$ be a parabolic subalgebra, and write $\p = \ml\oplus
\n$. Let $L\subset G$ be the connected subgroup with Lie algebra $\ml$.
Let $Le_0\subset \ml$ be a nilpotent orbit in $\ml$. The following is part of
the content of \cite{colmcgov}, Theorem 7.1.1.

\begin{theorem}\label{thm:1}
\begin{enumerate}
\item[$(1)$] There is a unique nilpotent orbit $Ge\subset \g$ such that
$Ge \cap (Le_0\oplus \n)$ is dense in $Le_0\oplus \n$.
\item[$(2)$] $\dim Ge = \dim Le_0 + 2\dim \n$.
\item[$(3)$] $Ge$ is the unique nilpotent orbit in $\g$ of that dimension
meeting $Le_0\oplus \n$.
\end{enumerate}
\end{theorem}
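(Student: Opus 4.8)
The plan is to realise the variety $\mathcal{M}:=Le_0\oplus\n$ as essentially a fibre of a collapsing map from a homogeneous bundle, and to extract all three statements from a single dimension count. Let $P\subset G$ be the parabolic subgroup with Lie algebra $\p$, with Levi decomposition $P=LU$, where $U$ is the unipotent radical with Lie algebra $\n$. First I would record three soft facts about $\mathcal{M}$. It is irreducible, being the image of $Le_0\times\n$ under addition; it is stable under the adjoint action of $P$, because the projection $\p\to\p/\n\cong\ml$ is $P$-equivariant (with $P$ acting through $P\to L$) and $\mathcal{M}$ is the preimage of the $L$-stable set $Le_0$; and every element of $\mathcal{M}$ is nilpotent in $\g$, since an element $l+n$ of $\p$ with $l\in\ml$, $n\in\n$ is $\g$-nilpotent exactly when $l$ is $\ml$-nilpotent. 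Finally $\dim\mathcal{M}=\dim Le_0+\dim\n$.

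Next I would form the associated bundle $G\times^{P}\mathcal{M}$ over $G/P$ together with the collapsing map
\[
\pi\colon G\times^{P}\mathcal{M}\longrightarrow\g,\qquad [g,x]\longmapsto\mathrm{Ad}(g)\,x.
\]
Its source is irreducible of dimension $\dim G/P+\dim\mathcal{M}=\dim\n+\dim Le_0+\dim\n=\dim Le_0+2\dim\n$, using $\dim G/P=\dim\n$. Hence the image $G\cdot\mathcal{M}$ is an irreducible, $G$-stable, constructible subset of the nilpotent cone, so its closure is $\overline{Ge}$ for a unique orbit $Ge$, the dense orbit of $\overline{G\cdot\mathcal{M}}$. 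Being $G$-stable and meeting the dense open part of $\overline{Ge}$ coming from $G\cdot\mathcal{M}$, the orbit $Ge$ is in fact contained in $G\cdot\mathcal{M}$; picking any $\mathrm{Ad}(g)x\in Ge$ with $x\in\mathcal{M}$ and applying $\mathrm{Ad}(g^{-1})$ (which preserves $Ge$) shows $x\in Ge\cap\mathcal{M}$. Thus $Ge\cap\mathcal{M}$ is a nonempty open subset of the irreducible $\mathcal{M}$, hence dense; this gives existence and density in $(1)$. Uniqueness in $(1)$ is then immediate, since two distinct orbits cannot both be dense in the irreducible $\mathcal{M}$. Moreover $\dim Ge=\dim\overline{G\cdot\mathcal{M}}\le\dim Le_0+2\dim\n$ at once from the dimension of the source.

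The heart of the matter, and the step I expect to be the main obstacle, is the reverse inequality $\dim Ge\ge\dim Le_0+2\dim\n$, equivalently the assertion that $\pi$ is generically finite onto its image. This amounts to showing that for generic $e$ in the image the set $S_e=\{gP\in G/P:\mathrm{Ad}(g^{-1})e\in\mathcal{M}\}$ (which is exactly the fibre $\pi^{-1}(e)$) is finite. The geometric input I would use is Richardson's argument from the case $e_0=0$: taking $e\in Ge\cap\mathcal{M}$ and an $\ssl_2$-triple $(h,e,f)$ with $h$ compatible with the grading $\g=\bigoplus_{k}\g(h,k)$ refining the one defined by $\p$, the nonnegativity of the relevant weights forces $C_\g(e)\subseteq\p$, i.e. $C_G(e)^{\circ}\subseteq P$; the $C_G(e)$-orbit of $[1,e]$ in the fibre is $C_G(e)/C_P(e)$, which is then finite. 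Upgrading this to finiteness of the whole generic fibre is the delicate point, but once it is in hand one gets $\dim G\cdot\mathcal{M}=\dim Le_0+2\dim\n$, hence $\dim Ge=\dim Le_0+2\dim\n$, which is $(2)$.

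It then remains to assemble $(3)$, and here no further analytic work is needed. Suppose a nilpotent orbit $Gx$ meets $\mathcal{M}$. Since $Gx\cap G\cdot\mathcal{M}\neq\emptyset$ and $G\cdot\mathcal{M}$ is $G$-stable, we get $Gx\subseteq G\cdot\mathcal{M}\subseteq\overline{Ge}$, whence $\dim Gx\le\dim Ge=\dim Le_0+2\dim\n$; so $Ge$ has maximal dimension among orbits meeting $\mathcal{M}$. If in addition $\dim Gx=\dim Le_0+2\dim\n$, then $Gx$ is an orbit of dimension $\dim Ge$ contained in $\overline{Ge}$, and as the boundary $\overline{Ge}\setminus Ge$ is a union of strictly smaller orbits this forces $Gx=Ge$, giving $(3)$. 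In summary, only the generic finiteness of $\pi$ (equivalently the containment $C_\g(e)\subseteq\p$ for a suitable representative) is genuinely hard; existence, density, and uniqueness reduce to irreducibility of $\mathcal{M}$ and bookkeeping with orbit closures.
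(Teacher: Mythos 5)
First, a remark on the comparison itself: the paper does not prove Theorem \ref{thm:1} --- it imports it verbatim from \cite{colmcgov}, Theorem 7.1.1 (going back to Lusztig--Spaltenstein \cite{lusp}) --- so your proposal has to be measured against the standard proof rather than against anything in the text. Your soft steps are all correct and are essentially the standard ones: irreducibility and $P$-stability of $Le_0\oplus\n$, nilpotency of its elements, the collapsing map $\pi\colon G\times^P(Le_0\oplus\n)\to\g$, the identification of the dense orbit $Ge$ in $\overline{G\cdot(Le_0\oplus\n)}$, the deduction of (1), the upper bound $\dim Ge\le\dim Le_0+2\dim\n$, and the derivation of (3) from (2) via the fact that the boundary of an orbit closure consists of strictly smaller orbits. (For the upper bound there is an even softer route: for any $x\in Le_0\oplus\n$ one has $Px\subseteq Le_0\oplus\n$, hence $\dim C_{\p}(x)\ge\dim\p-\dim Le_0-\dim\n=\dim C_{\ml}(e_0)$ and therefore $\dim Gx\le\dim Le_0+2\dim\n$ for \emph{every} orbit meeting $Le_0\oplus\n$.)

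The genuine gap is exactly where you flag it --- the lower bound in (2), i.e.\ generic finiteness of $\pi$ --- and the mechanism you sketch will not close it, for two separate reasons. (i) The containment $C_{\g}(e)\subseteq\p$ via Richardson's weight argument requires an $\ssl_2$-triple $(h,e,f)$ with $\bigoplus_{k\ge 0}\g(h,k)\subseteq\p$. For $e_0=0$ such an $h$ can be extracted from the cocharacter defining $\p$; for $e_0\neq 0$, producing a representative $e\in Ge\cap(Le_0\oplus\n)$ together with such an adapted $h$ is essentially equivalent to what is being proved (it is what the paper's analysis of $\uu(\D)$ in Lemma \ref{lem:1} delivers, but that lemma already presupposes Theorem \ref{thm:1}, so invoking it here would be circular). (ii) Even granting $C_{\g}(e)\subseteq\p$, you only obtain that the $C_G(e)$-orbit of the base point of the fibre $S_e$ is finite. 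The fibre is a union of many $C_G(e)$-orbits, and already infinitesimally at the base point its tangent space is bounded by $\{x\in\g:[x,e]\in[\ml,e_0]+\n\}/\p$, whose vanishing is the condition ``$[x,e]\in[\ml,e_0]+\n$ implies $x\in\p$'' --- strictly stronger than ``$[x,e]=0$ implies $x\in\p$''. So $C_{\g}(e)\subseteq\p$ does not even give $0$-dimensionality of $S_e$ at the base point, let alone finiteness of the whole generic fibre. The actual content of the theorem is the existence of some $x\in Le_0\oplus\n$ with $\dim C_{\g}(x)=\dim C_{\ml}(e_0)$, which the standard proofs obtain by a separate argument; without it, parts (2) and (3) remain unproved.
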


The orbit $Ge$ of the theorem is said to be induced from the orbit $Le_0$.
It only depends on the Levi subalgebra $\ml$ and not on the parabolic
subalgebra $\p$ (\cite{colmcgov}, Theorem 7.1.3). Therefore we write
$Ge = \Ind_{\ml}^{\g}(Le_0)$. Also, induction is transitive: if $\ml_1\subset
\ml_2$ are two Levi subalgebras, and $L_1e_1$ is a nilpotent orbit
in $\ml_1$, then
$$ {\Ind}_{\ml_2}^{\g} ({\Ind}_{\ml_1}^{\ml_2}(L_1e_1)) = {\Ind}_{\ml_1}^{\g}
(L_1e_1)$$
(\cite{colmcgov}, Proposition 7.1.4). Furthermore, a nilpotent orbit
in $\g$ that is not induced from a nilpotent orbit of a Levi subalgebra is
said to be rigid.\par
The problem considered in this paper is to determine $\Ind_{\ml}^{\g}(Le_0)$
for all $G$-conjugacy classes of Levi subalgebras $\ml\subset \g$, and
nilpotent orbits $Le_0\subset \ml$. Of course, because of transitivity we
may restrict to the rigid nilpotent orbits of $\ml$.\par
Consider the union of all $G$-orbits in $\g$ of the same dimension $d$. The
irreducible components of these varieties are called the sheets of $\g$ (cf.
\cite{borho}, \cite{borhokraft}).
The sheets of $\g$ are parametrised by the $G$-conjugacy
classes of pairs $(\ml, Le)$, where $\ml$ is a Levi subalgebra, and
$Le$ a rigid nilpotent orbit in $\ml$. Furthermore,
in the sheet corresponding to
$(\ml,Le)$ there is a unique nilpotent orbit, namely $\Ind_{\ml}^{\g}(Le)$.\par
Let $\Pi\subset \Delta$ and let $\p = \p_\Pi$ be the corresponding
standard parabolic subalgebra. Write $\p = \ml\oplus \n$, and let
$Le_0$ be a rigid nilpotent orbit in $\ml$. Let $\D$ be the Dynkin diagram of
$\g$, which we label in the following way. If $\alpha\not\in\Pi$ then we
give the node corresponding to $\alpha$ the label $2$. Let $D_0$ be the
weighted Dynkin diagram of $Le_0$. Since $\Pi$ is a set of simple roots of
$\ml$, the nodes of $D_0$ correspond to the elements of $\Pi$. We give
the node in $\D$ corresponding to $\alpha\in \Pi$ its label in $D_0$,
under this correspondence. We call the diagram $\D$ together with its labeling,
the sheet diagram of the sheet corresponding to $(\ml,Le_0)$. \par
We note that from the sheet diagram we can recover the sheet. Indeed,
from \cite{colmcgov}, Theorem 7.1.6 we get that the weighted Dynkin
diagram of a rigid nilpotent element only has labels $0$ and $1$. Hence,
from the nodes with label $0$ or $1$ we recover $\Pi$. Then from the labels
in those nodes we get the weighted Dynkin diagram of the orbit $Le_0$.
Hence we recover the orbit $Le_0$ as well. \par
Let $\D$ be the sheet diagram corresponding to $(\ml,Le_0)$. Let
$\omega : \Phi^+ \to \Z$ be the additive function such that for $\alpha\in
\Delta$, $\omega(\alpha)$
is the label of the node corresponding to $\alpha$ in $\D$. Let
$\uu(\D)$ be the subspace of $\g$ spanned by all $\g_\alpha$ with
$\omega(\alpha) \geq 2$. Let $Ge = \Ind_{\ml}^{\g}(Le_0)$.

\begin{lemma}\label{lem:1}
$Ge \cap \uu(\D)$ is dense in $\uu(\D)$.
\end{lemma}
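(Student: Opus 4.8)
The plan is to recognize that the sheet diagram $\D$ encodes a specific semisimple element and to show that this element is the characteristic of $Ge$; once that is done, the density statement reduces to the classical fact that a nilpotent orbit meets the positive part of its own Dynkin grading densely. Concretely, let $h\in\h$ be determined by $\alpha(h)=\omega(\alpha)$ for all simple roots $\alpha$. By additivity of $\omega$ one has $\alpha(h)=\omega(\alpha)$ for every $\alpha\in\Phi^+$, and since $\omega(\alpha)\ge 0$ there, the spaces $\g(h,k)=\{x:[h,x]=kx\}$ satisfy $\uu(\D)=\bigoplus_{k\ge 2}\g(h,k)$. Writing $h_0$ for the characteristic of $Le_0$ in $\ml$ (so $\alpha(h_0)$ is the label of $\alpha$ in $D_0$ for $\alpha\in\Pi$) and $\ml(h_0,k)=\{x\in\ml:[h_0,x]=kx\}$, I would record that for $\alpha\in\Psi$ one has $\omega(\alpha)=\alpha(h_0)$, so $h=h_0+z$ with $z$ central in $\ml$, while every $\alpha>0$ with $\alpha\notin\Psi$ has $\omega(\alpha)\ge 2$. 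Consequently $\g(h,0)=\ml(h_0,0)$, $\g(h,1)=\ml(h_0,1)$, and $\uu(\D)=\big(\bigoplus_{k\ge 2}\ml(h_0,k)\big)\oplus\n$, in particular $\n\subset\uu(\D)$.

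Next I would carry out the dimension bookkeeping. The $\ssl_2$-theory in $\ml$ gives $\dim Le_0=\dim\ml-\dim\ml(h_0,0)-\dim\ml(h_0,1)$, and $\dim\g=\dim\ml+2\dim\n$; together with Theorem \ref{thm:1}(2) this yields $\dim Ge=\dim Le_0+2\dim\n=\dim\g-\dim\g(h,0)-\dim\g(h,1)$, exactly the orbit dimension one predicts if $h$ is the characteristic of $Ge$. This matching is what lets me invoke the uniqueness in Theorem \ref{thm:1}(3): it suffices to find a dense set of $w\in\uu(\D)$ that are nilpotent, lie in $Le_0\oplus\n$, and satisfy $\dim Gw=\dim\g-\dim\g(h,0)-\dim\g(h,1)$. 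For the membership I would use the classical fact that $Le_0\cap\big(\bigoplus_{k\ge 2}\ml(h_0,k)\big)$ is dense in $\bigoplus_{k\ge 2}\ml(h_0,k)$; crossing with $\n$ shows $\uu(\D)\cap(Le_0\oplus\n)$ is dense in $\uu(\D)$, whence $\uu(\D)\subset\overline{Ge}$ and every orbit meeting $\uu(\D)$ has dimension at most $\dim Ge$.

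The main obstacle is the reverse inequality: that the generic $w\in\uu(\D)$ has $\dim\g^w\le\dim\g(h,0)+\dim\g(h,1)$, that is, that $h$ really is the characteristic of a nilpotent orbit meeting $\uu(\D)$. Since $w\mapsto\dim\g^w$ is upper semicontinuous on $\uu(\D)$, it is enough to exhibit one $\ssl_2$-triple $(h,e,f)$ with $e\in\g(h,2)\subset\uu(\D)$, for then $\dim\g^e=\dim\g(h,0)+\dim\g(h,1)$. I would seek $e=e_0+e_1$ with $e_0$ a representative of $Le_0$ in $\ml(h_0,2)$ and $e_1\in\n\cap\g(h,2)$, the existence reducing to surjectivity of $\ad e:\g(h,0)\to\g(h,2)$; the $\ml(h_0,2)$-component is already covered since $\ad e_0:\ml(h_0,0)\to\ml(h_0,2)$ is onto, so the genuine work is to choose $e_1$ so that the $\ml(h_0,0)$-action fills the nilradical part $\n\cap\g(h,2)$. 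This is precisely where the coarse dimension count is insufficient and one must invoke the finer module structure of the Dynkin grading (the theory of associated cocharacters), and I expect it to be the hardest step. Granting it, upper semicontinuity gives the centralizer bound on a dense open locus of $\uu(\D)$; intersecting with the dense set $\big(Le_0\cap\bigoplus_{k\ge2}\ml(h_0,k)\big)\oplus\n\subset Le_0\oplus\n$, Theorem \ref{thm:1}(3) forces $Gw=Ge$ there, and since this locus is dense in the irreducible $\uu(\D)$, the set $Ge\cap\uu(\D)$ is dense in $\uu(\D)$.
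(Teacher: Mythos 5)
Your second paragraph already contains all the ingredients of the paper's proof: the identity $\uu(\D)=\ml_{\geq 2}\oplus\n$, the density of $Le_0\cap\ml_{\geq 2}$ in $\ml_{\geq 2}$ (this is Lemma 4.1.4 of Collingwood--McGovern, the paper's key citation), and the conclusion $\uu(\D)\subseteq\overline{Ge}$. The genuine gap is the step you yourself flag and then ``grant'': producing a dense set of elements of $\uu(\D)$ whose orbit has dimension exactly $\dim Ge$. The route you propose for this cannot work. You want an $\ssl_2$-triple $(h,e,f)$ with $e\in\g(h,2)$, where $h$ is the semisimple element read off from the sheet diagram; since that $h$ is dominant, the existence of such a triple would force the sheet diagram to be the weighted Dynkin diagram of the orbit of $e$. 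But sheet diagrams are in general not weighted Dynkin diagrams of any orbit: for instance, in $E_6$ the orbit $D_5(a_1)$ has characteristic $1\,1\,\overset{2}{0}\,1\,1$ while its sheet diagram is $0\,0\,\overset{0}{2}\,2\,2$, and the latter does not occur as the characteristic of any nilpotent orbit in $E_6$. So no triple $(h,e,f)$ exists, the grading by $h$ is genuinely not a Dynkin grading, and the formula $\dim C_{\g}(e)=\dim\g(h,0)+\dim\g(h,1)$ that you want to quote is exactly what fails to be available. (The inequality you are after for generic $w\in\uu(\D)$ is true, but it is essentially equivalent to the lemma itself, so this is a circle rather than a reduction.)

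The irony is that the missing step follows at once from what you have already established, with no $\ssl_2$-theory for $h$. Since $L$ acts transitively on $Le_0$ and normalizes $\n$, one has $L\cdot\bigl((Le_0\cap\ml_{\geq 2})\oplus\n\bigr)=Le_0\oplus\n$; as $Ge$ meets $Le_0\oplus\n$ by Theorem \ref{thm:1}(1), some $L$-translate of a point of $(Le_0\cap\ml_{\geq 2})\oplus\n\subseteq\uu(\D)$ lies in $Ge$, hence $Ge\cap\uu(\D)\neq\varnothing$. Now $\uu(\D)$ is irreducible and contained in $\overline{Ge}$, and $Ge$ is open in $\overline{Ge}$, so $Ge\cap\uu(\D)$ is a nonempty open, hence dense, subset of $\uu(\D)$. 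This is in substance how the paper concludes: it combines the density of $Le_0\cap\ml_{\geq 2}$ in $\ml_{\geq 2}$ with Theorem \ref{thm:1}(1) directly, never introducing the element $h$ or its centralizer dimensions. I would drop your third paragraph entirely and replace it with this two-line equivariance argument.
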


\begin{proof}
Recall that $\Pi$ is a set of simple roots of the root system $\Psi$ of $\ml$.
If $e_0=0$ then $\uu(\D) = \n$, and the lemma follows by Theorem \ref{thm:1}.
So now we assume that $e_0\neq 0$. Note that $\ml'=[\ml,\ml]$
is the semisimple part of $\ml$. Let $\h' = \h\cap [\ml,\ml]$; then $\h'$ is a
Cartan subalgebra of $\ml'$. Let $h_0\in \h'$ be such that $\alpha(h_0)$ is
the label in $D$ corresponding to $\alpha$, for $\alpha\in \Pi$. After
possibly replacing $e_0$ by a $L$-conjugate we may assume that $e_0$ lies
in an $\ssl_2$-triple $(h_0,e_0,f_0)$. Let $\ml = \oplus_{k\in \Z} \ml(h_0,k)$
be the corresponding grading of $\ml$. Set $\ml_{\geq 2} = \oplus_{k\geq 2}
\ml(h_0,k)$. Then by \cite{colmcgov}, Lemma 4.1.4, $Le_0 \cap \ml_{\geq 2}$
is dense in $\ml_{\geq 2}$. \par
Let $\beta\in \Phi^+$, but not in $\Psi$. Then written as a linear combination
of simple roots, $\beta$ has at least one $\alpha\in \Delta\setminus \Pi$
with positive coefficient. Hence $\omega(\beta)\geq 2$. Furthemore, a
$\beta\in \Psi$ has $\omega(\beta) = \beta(h_0)$; hence $\omega(\beta)
\geq 2$ if and only if $\g_\beta\subset \ml_{\geq 2}$. It follows that
$\uu(\D)=\ml_{\geq 2} \oplus \n$.  \par
We conclude that $Le_0\oplus \n\cap \uu(\D)$ is dense in $\uu(\D)$. Since
$Ge\cap Le_0\oplus \n$ is dense in $Le_0\oplus \n$, we get that
$Ge \cap \uu(\D)$ is dense in $\uu(\D)$.
\end{proof}

\begin{lemma}\label{lem:2}
Write $s=\dim Ge$, and let $e'\in \uu(\D)$. Then $e'\in Ge$ if and only if
$\dim C_{\g}(e') = \dim\g -s$.
\end{lemma}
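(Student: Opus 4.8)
The plan is to prove the two implications separately. The ``only if'' direction is immediate from the orbit--stabilizer relation, while the ``if'' direction rests on the density statement already established in Lemma \ref{lem:1} together with the standard behaviour of orbit closures.

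First I would record that for any $x\in Ge$ the orbit--stabilizer theorem gives $\dim C_\g(x) = \dim\g - \dim Gx = \dim\g - s$. Applied to $x=e'$ this settles the forward direction at once: if $e'\in Ge$ then $\dim C_\g(e') = \dim\g - s$. For the converse, suppose $e'\in\uu(\D)$ satisfies $\dim C_\g(e') = \dim\g - s$. Since $\uu(\D)$ is a sum of positive root spaces, it is contained in the nilradical $\bigoplus_{\alpha>0}\g_\alpha$ of the Borel, and every element there is ad-nilpotent (the height grading shows $\ad e'$ is nilpotent); hence $e'$ is nilpotent and lies in a well-defined orbit $Ge'$. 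The dimension hypothesis, again via orbit--stabilizer, yields $\dim Ge' = \dim\g - \dim C_\g(e') = s = \dim Ge$.

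Now I would invoke Lemma \ref{lem:1}: since $Ge\cap\uu(\D)$ is dense in $\uu(\D)$, we have $\uu(\D)=\overline{Ge\cap\uu(\D)}\subseteq\overline{Ge}$, so in particular $e'\in\overline{Ge}$. Because $\overline{Ge}$ is closed and $G$-stable, the entire orbit $Ge'$ is contained in $\overline{Ge}$. The decisive step is then to use the standard fact that the boundary $\overline{Ge}\setminus Ge$ is a union of $G$-orbits of strictly smaller dimension than $Ge$. As $\dim Ge'=\dim Ge$, the orbit $Ge'$ cannot lie in this boundary, and therefore $Ge'=Ge$, i.e.\ $e'\in Ge$.

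I expect the last step to be the main point to argue with care: one must make sure that an orbit of \emph{maximal} dimension sitting inside $\overline{Ge}$ is forced to be $Ge$ itself, which is exactly the content of the orbit-closure dimension fact. Everything else reduces to bookkeeping with the orbit--stabilizer relation and the density result of Lemma \ref{lem:1}.
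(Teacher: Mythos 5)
Your proof is correct, and the ``if'' direction takes a genuinely different route from the paper's. The paper argues via centralizer dimensions on $\uu(\D)$: it lets $t$ be the minimal dimension of $C_{\g}(u)$ for $u\in\uu(\D)$, notes that the locus where this minimum is attained is dense, intersects it with the dense set $Ge\cap\uu(\D)$ of Lemma \ref{lem:1} to conclude $t=\dim\g-s$, and then invokes Borho--Kraft (Satz 5.4) to place any $e'\in\uu(\D)$ with minimal centralizer in the sheet corresponding to $\D$, whose unique nilpotent orbit is $Ge$. You instead use Lemma \ref{lem:1} to get $\uu(\D)\subseteq\overline{Ge\cap\uu(\D)}\subseteq\overline{Ge}$, hence $Ge'\subseteq\overline{Ge}$ by $G$-stability of the closure, and finish with the standard fact that $\overline{Ge}\setminus Ge$ is a union of orbits of strictly smaller dimension, so an orbit of dimension $s$ inside $\overline{Ge}$ must equal $Ge$. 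Your argument is more self-contained: it avoids the semicontinuity/``two dense sets meet'' step and the appeal to sheet theory, relying only on the orbit--stabilizer relation and elementary orbit-closure geometry. What the paper's version buys is a direct link to the sheet picture (the element $e'$ is located in the sheet attached to $\D$), which is the language the surrounding algorithm is phrased in; but as a proof of the stated equivalence, yours is complete and arguably cleaner.
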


\begin{proof}
First of all, if $e'\in Ge$ then $\dim C_{\g}(e') = \dim C_{\g}(e) =
\dim \g-s$. For the converse, let $t$ be the minimum dimension
of a centralizer $C_{\g}(u)$, for $u\in \uu(\D)$. Then the set of elements
of $\uu(\D)$ with centralizer of dimension $t$ is dense in $\uu(\D)$. But
two dense sets always meet. Hence from lemma \ref{lem:1} we get
$t = \dim C_{\g}(e) = \dim \g -s$.
So if $e'\in \uu(\D)$, and $\dim C_{\g}(e') = \dim \g-s$,
then the dimension of $C_{\g}(e')$ is minimal among all elements of $\uu(\D)$.
Hence the dimension of the orbit $Ge'$ is maximal. Now from \cite{borhokraft},
Satz 5.4 it follows that $e'$ lies in the sheet corresponding to $\D$.
Therefore, $e'\in Ge$.
\end{proof}

Now we describe an algorithm for listing the induced nilpotent orbits in $\g$.
The output is a list of pairs $(D,\widetilde{D})$, where $\widetilde{D}$
runs through the sheet diagrams of $\g$, and $D$ is the weighted Dynkin diagram
of the corresponding nilpotent orbit. We use a set $\Omega = \{0,1,\ldots,N\}$
of integers; where $N$ is a previously chosen parameter. The algorithm
takes the following steps:

\begin{enumerate}
\item Using the method of Section \ref{sect:levi} get representatives of
the $G$-conjugacy classes of Levi subalgebras, each lying inside a
standard parabolic subalgebra.
\item Set $\mathcal{I}=\varnothing$.
For each Levi subalgebra $\ml$ from the list, and each rigid
nilpotent orbit $Le_0\subset \ml$ do the following:
\begin{enumerate}
\item Construct the sheet diagram $\widetilde{D}$ of the pair $(\ml,Le_0)$.
\item Compute a basis $\{u_1,\ldots,u_m\}$ of the space $\uu(\widetilde{D})$.
\item Set $s = \dim Le_0 +2\dim \n$ (where $\p = \ml\oplus \n$ is the
parabolic subalgebra containing $\ml$).
\item Let $e'=\sum_{i=1}^m \alpha_iu_i\in \uu(\widetilde{D})$, where the
$\alpha_i\in \Omega$ are chosen randomly, uniformly, and independently.
\item
If $\dim C_{\g}(e') = \dim \g-s$ then
compute the weighted Dynkin diagram (cf. Section \ref{sect:wd})
$D$ of the orbit $Ge'$
and add $(D,\widetilde{D})$ to $\mathcal{I}$. Otherwise return to Step 2(d).
\end{enumerate}
\item Return $\mathcal{I}$.
\end{enumerate}

\begin{proposition}
The previous algorithm is correct, and terminates for large enough $N$.
\end{proposition}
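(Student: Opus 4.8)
The plan is to separate the two assertions. For correctness, I would first invoke the parametrization of sheets recalled above: the $G$-conjugacy classes of pairs $(\ml, Le_0)$ with $\ml$ a Levi subalgebra and $Le_0$ a rigid nilpotent orbit are in bijection with the sheet diagrams, and by transitivity of induction every induced orbit arises as $\Ind_{\ml}^{\g}(Le_0)$ for such a pair. Hence the double loop in Steps (1)--(2) enumerates exactly the data needed, and completeness of the output amounts to each loop iteration terminating. For a fixed pair, Theorem \ref{thm:1}(2) gives $\dim Ge = \dim Le_0 + 2\dim\n = s$, so the quantity $s$ computed in Step 2(c) is precisely $\dim Ge$, where $Ge = \Ind_{\ml}^{\g}(Le_0)$. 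The heart of correctness is then Lemma \ref{lem:2}: for $e'\in\uu(\D)$ one has $e'\in Ge$ if and only if $\dim C_{\g}(e') = \dim\g - s$. Consequently the test in Step 2(e) accepts $e'$ exactly when $e'$ represents the induced orbit, so the weighted Dynkin diagram $D$ recorded is the correct one for $\Ind_{\ml}^{\g}(Le_0)$.

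For termination I would reformulate the acceptance condition geometrically. Since $\dim C_{\g}(e') = \dim\g - \rank\ad(e')$, the condition $\dim C_{\g}(e') = \dim\g - s$ is equivalent to $\rank\ad(e') = s$. By Lemma \ref{lem:2}, $\dim\g - s$ is the minimal centralizer dimension attained on $\uu(\D)$, so $s$ is the maximal rank of $\ad(e')$ over $\uu(\D)$; thus the accepting set $U = \{\, u\in\uu(\D) : \rank\ad(u) = s \,\}$ is a nonempty Zariski-open subset, namely the locus where at least one $s\times s$ minor of the matrix of $\ad(u)$ is nonzero.

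Writing $e' = \sum_i \alpha_i u_i$, each entry of $\ad(e')$ is linear in the parameters $\alpha_1,\ldots,\alpha_m$, so every $s\times s$ minor is a polynomial in the $\alpha_i$ of degree at most $s$. Because $U\neq\varnothing$, at least one such minor $P$ is not identically zero. I would then apply the Schwartz--Zippel estimate: a nonzero polynomial of degree $d$ vanishes on at most $d(N+1)^{m-1}$ of the $(N+1)^m$ points of $\Omega^m$, so for $N\geq\deg P$ the nonvanishing locus of $P$ meets $\Omega^m$, and in fact a fraction at least $1 - s/(N+1)$ of the tuples lie in $U$. Hence for large enough $N$ each independent random draw in Step 2(d) lands in $U$ with positive probability, and the inner loop 2(d)--2(e) terminates with probability one; since the outer loop is finite, the whole algorithm terminates and produces every sheet diagram with its correct $D$.

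The main obstacle is the termination argument, specifically converting the topological density supplied by Lemmas \ref{lem:1} and \ref{lem:2} into the combinatorial guarantee that the integer grid $\Omega^m$ actually meets the open set $U$. This is exactly where the degree bound on the minors and the Schwartz--Zippel lemma are needed; one must check that the minors are genuine polynomials over $\Q$ so the count of grid zeros applies, and that $s$ (rather than merely $\deg P$) yields a clean, computable threshold on $N$.
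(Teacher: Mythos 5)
Your proof is correct, and its overall skeleton matches the paper's: correctness of the output rests on Lemma \ref{lem:2} (the centralizer-dimension test accepts exactly the elements of $Ge\cap\uu(\D)$), and termination rests on Lemma \ref{lem:1} (density of $Ge\cap\uu(\D)$). Where you genuinely depart from the paper is in the termination half. The paper argues qualitatively: density of $Ge\cap\uu(\D)$ implies that for large enough $N$ a random element of the grid lies in $Ge$ with high probability, leaving implicit the step from Zariski density of an open set to the existence of integer grid points in it. You make that step explicit: you identify the accepting set as the nonvanishing locus of the $s\times s$ minors of $\ad(e')$, observe that each minor is a polynomial of degree at most $s$ in the coefficients $\alpha_i$, and invoke Schwartz--Zippel to get a success probability of at least $1-s/(N+1)$ per draw, hence a concrete threshold $N\geq s$ (and a fortiori $N\geq\dim\g$ always suffices). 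What the paper's version buys is brevity; what yours buys is a rigorous justification that the finite grid $\Omega^m$ actually meets the open set, together with a computable bound on $N$ and on the expected number of iterations --- which is also consistent with the paper's empirical remark that small $N$ works in practice. The only point worth flagging is cosmetic: your set $U$ is the locus where $\rank\ad(u)\geq s$, and it coincides with the locus $\rank\ad(u)=s$ only because Lemma \ref{lem:2} shows $s$ is the maximal rank attained on $\uu(\D)$; you do say this, but it is the one place where the openness claim silently leans on the lemma.
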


\begin{proof}
Let $\widetilde{D}$ be a sheet diagram, and let $Ge$ be the corresponding
induced nilpotent orbit. Then by Lemma \ref{lem:2},
steps 2(d) and 2(e) are executed until an element $e'$ of $Ge$ is found.
So, if the algorithm terminates, then it returns the correct output.
On the other hand, by Lemma \ref{lem:1}, the set $Ge\cap \uu(\widetilde{D})$
is dense in $\uu(\widetilde{D})$. Hence for large enough $N$, the random
element $e'$ lies in $Ge$ with high probability. Therefore, the algorithm
will terminate.
\end{proof}

\begin{remark}
In practice, it turns out that selecting a rather small $N$ (e.g., $N=10$)
suffices in order that the algorithm terminates. Furthermore, if the
algorithm needs to many rounds for a given $N$, then one can try again
with a higher value for $N$.
\end{remark}

\bigskip
\section{The Tables}\label{sect:tables}

In this section we give the tables of the induced nilpotent orbits in the
Lie algebras of exceptional type, computed with the algorithm of the previous
section.\footnote{The program needed 17, 282, 9055, 3 and 0.1
seconds respectively for $E_6$, $E_7$, $E_8$, $F_4$ and $G_2$;
the computations were done on a
2GHz processor with 1GB of memory for {\sf GAP}.}
In order to use this algorithm, we need to know the rigid nilpotent
orbits of each Levi subalgebra. For Levi subalgebras of classical type,
this is described in \cite{colmcgov}, \S 7.3. For the Lie algebras of
exceptional type it follows from our calculations what the rigid nilpoten
orbits are. We summarise this in the following theorem.

\begin{theorem}
The weighted Dynkin diagrams of the rigid nilpotent orbits $($except the zero
orbit$)$ in $E_6$, $E_7$, $E_8$, $F_4$ and $G_2$ are given in Tables
{\rm \ref{tab:rigidE6}, \ref{tab:rigidE7}, \ref{tab:rigidE8}, \ref{tab:rigidF4},~\ref{tab:rigidG2}.}
\end{theorem}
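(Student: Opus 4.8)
The plan is to turn the definition of rigidity into a finite, checkable computation: a nonzero nilpotent orbit $Ge\subset\g$ is rigid precisely when it does \emph{not} arise as $\Ind_{\ml}^{\g}(Le_0)$ for any proper Levi subalgebra $\ml$ and any nilpotent orbit $Le_0\subset\ml$. Since the nilpotent orbits of each exceptional $\g$ are classified by their finitely many weighted Dynkin diagrams, and the $G$-conjugacy classes of Levi subalgebras are finite and computable by the algorithm of Section~\ref{sect:levi}, the set of rigid orbits is determined once every induced orbit has been computed. For the latter I would invoke the induction algorithm of Section~\ref{sect:induce}: running it over all pairs $(\ml,Le_0)$ with $\ml$ a proper Levi produces the list of all non-rigid orbits, and the rigid ones are exactly the complement. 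The zero orbit has $\Ind_{\ml}^{\g}(0)$ of dimension $2\dim\n>0$ for any proper $\ml$, so it is never induced and is trivially rigid; it is excluded from the tables by convention.

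First I would reduce the work using transitivity of induction (\cite{colmcgov}, Proposition~7.1.4): the set of induced orbits is unchanged if, for each proper Levi $\ml$, one induces only from the \emph{rigid} orbits of $\ml$ rather than from all of its nilpotent orbits. This keeps the input to the induction algorithm small, but it also introduces the inductive structure that drives the whole argument, since to know the rigid orbits of $\ml$ one must already have classified them for $\ml$. I would therefore order the exceptional algebras by dimension and argue by induction. The base cases are $G_2$, $F_4$, and $E_6$, all of whose proper Levi subalgebras are of classical type, so their rigid orbits are supplied by \cite{colmcgov}, \S 7.3. Then $E_7$ is handled using the result for its proper Levi of type $E_6$ (together with its classical Levis), and finally $E_8$ using the results for $E_6$ and $E_7$.

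With the rigid orbits of all proper Levi subalgebras in hand, the core step is to run the algorithm of Section~\ref{sect:induce} for every pair $(\ml,Le_0)$ and record each output $\Ind_{\ml}^{\g}(Le_0)$ by its weighted Dynkin diagram; correctness of this step is guaranteed by the proposition proved there. The rigid orbits of $\g$ are then read off as those diagrams in the classification that never appear in this list. As an independent check I would verify the necessary condition of \cite{colmcgov}, Theorem~7.1.6, that every rigid orbit has a weighted Dynkin diagram with labels only in $\{0,1\}$; this prunes the candidate list and guards against computational error. The surviving diagrams are then tabulated.

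The main obstacle I anticipate is not a single conceptual step but the bootstrapping together with the size of the computation. One must ensure the inductive ordering is genuinely well-founded -- that no exceptional algebra occurs as a Levi subalgebra of one of equal or smaller dimension -- and that the classical-Levi data from \cite{colmcgov} is matched correctly across the $G$-conjugacy classes of Levi subalgebras produced by Section~\ref{sect:levi}. On the computational side, the induction algorithm is randomized and must be run to completion for every pair $(\ml,Le_0)$ in $E_8$, which is by far the heaviest case; ensuring that the parameter $N$ is large enough for termination on every pair, and that the collected list of induced orbits is genuinely exhaustive, is where the real care lies.
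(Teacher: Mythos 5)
Your proposal matches the paper's own argument: the paper establishes the theorem precisely by running the induction algorithm of Section~\ref{sect:induce} over all pairs $(\ml, Le_0)$ with $Le_0$ rigid in $\ml$, taking the rigid orbits of classical Levi subalgebras from \cite{colmcgov}, \S 7.3, and bootstrapping the rigid orbits of exceptional Levi subalgebras from the smaller cases already computed; the rigid orbits of $\g$ are then the complement of the set of induced orbits. Your additional consistency check via \cite{colmcgov}, Theorem~7.1.6 is a sensible safeguard but does not change the route.
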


\setlongtables

\begin{longtable}{|l|c|}
\caption{Rigid nilpotent orbits in $E_6$.}\label{tab:rigidE6}
\endfirsthead
\hline
\multicolumn{2}{|l|}{\small\slshape Rigid nilpotent orbits in $E_6$.} \\
\hline
\endhead
\hline
\endfoot
\endlastfoot

\hline

label & characteristic \\
\hline

$A_1$ & $0~~~~0~~~~\overset{\text{\normalsize 1}}{0}~~~~0~~~~0$ \\

$3A_1$ & $0~~~~0~~~~\overset{\text{\normalsize 0}}{1}~~~~0~~~~0$ \\

$2A_2+A_1$ & $1~~~~0~~~~\overset{\text{\normalsize 0}}{1}~~~~0~~~~1$ \\

\hline

\end{longtable}

\begin{longtable}{|l|c|}
\caption{Rigid nilpotent orbits in $E_7$.}\label{tab:rigidE7}
\endfirsthead
\hline
\multicolumn{2}{|l|}{\small\slshape Rigid nilpotent orbits in $E_7$.} \\
\hline
\endhead
\hline
\endfoot
\endlastfoot

\hline
label & characteristic \\
\hline

$A_1$ & $1~~~~0~~~~\overset{\text{\normalsize 0}}{0}~~~~0~~~~0~~~~0$ \\

$2A_1$ & $0~~~~0~~~~\overset{\text{\normalsize 0}}{0}~~~~0~~~~1~~~~0$ \\

$(3A_1)'$ & $0~~~~1~~~~\overset{\text{\normalsize 0}}{0}~~~~0~~~~0~~~~0$ \\

$4A_1$ & $0~~~~0~~~~\overset{\text{\normalsize 1}}{0}~~~~0~~~~0~~~~1$ \\

$A_2+2A_1$ & $0~~~~0~~~~\overset{\text{\normalsize 0}}{1}~~~~0~~~~0~~~~0$ \\

$A_1+2A_2$ & $0~~~~1~~~~\overset{\text{\normalsize 0}}{0}~~~~0~~~~1~~~~0$ \\

$(A_1+A_3)'$ & $1~~~~0~~~~\overset{\text{\normalsize 0}}{1}~~~~0~~~~0~~~~0$ \\

\hline

\end{longtable}

\begin{longtable}{|l|c|}
\caption{Rigid nilpotent orbits in $E_8$.}\label{tab:rigidE8}
\endfirsthead
\hline
\multicolumn{2}{|c|}{\small\slshape Rigid nilpotent orbits in $E_8$.} \\
\hline
\endhead
\hline
\endfoot
\endlastfoot

\hline
label & characteristic \\
\hline

$A_1$ &
$0~~~~0~~~~\overset{\text{\normalsize 0}}{0}~~~~0~~~~0~~~~0~~~~1$ \\

$2A_1$ &
$1~~~~0~~~~\overset{\text{\normalsize 0}}{0}~~~~0~~~~0~~~~0~~~~0$\\

$3A_1$ &
$0~~~~0~~~~\overset{\text{\normalsize 0}}{0}~~~~0~~~~0~~~~1~~~~0$\\

$4A_1$ &
$0~~~~0~~~~\overset{\text{\normalsize 1}}{0}~~~~0~~~~0~~~~0~~~~0$\\

$A_2+A_1$ &
$1~~~~0~~~~\overset{\text{\normalsize 0}}{0}~~~~0~~~~0~~~~0~~~~1$\\

$A_2+2A_1$ &
$0~~~~0~~~~\overset{\text{\normalsize 0}}{0}~~~~0~~~~1~~~~0~~~~0$\\

$A_2+3A_1$ &
$0~~~~1~~~~\overset{\text{\normalsize 0}}{0}~~~~0~~~~0~~~~0~~~~0$\\

$2A_2+A_1$ &
$1~~~~0~~~~\overset{\text{\normalsize 0}}{0}~~~~0~~~~0~~~~1~~~~0$\\

$A_3+A_1$ &
$0~~~~0~~~~\overset{\text{\normalsize 0}}{0}~~~~0~~~~1~~~~0~~~~1$\\

$2A_2+2A_1$ &
$0~~~~0~~~~\overset{\text{\normalsize 0}}{0}~~~~1~~~~0~~~~0~~~~0$\\

$A_3+2A_1$ &
$0~~~~1~~~~\overset{\text{\normalsize 0}}{0}~~~~0~~~~0~~~~0~~~~1$\\

$D_4(a_1)+A_1$ &
$0~~~~0~~~~\overset{\text{\normalsize 1}}{0}~~~~0~~~~0~~~~1~~~~0$\\

$A_3+A_2+A_1$ &
$0~~~~0~~~~\overset{\text{\normalsize 0}}{1}~~~~0~~~~0~~~~0~~~~0$\\

$2A_3$ &
$1~~~~0~~~~\overset{\text{\normalsize 0}}{0}~~~~1~~~~0~~~~0~~~~0$ \\

$D_5(a_1)+A_2$ &
$0~~~~1~~~~\overset{\text{\normalsize 0}}{0}~~~~0~~~~1~~~~0~~~~1$ \\

$A_5+A_1$ &
$1~~~~0~~~~\overset{\text{\normalsize 0}}{1}~~~~0~~~~0~~~~0~~~~1$ \\

$A_4+A_3$ &
$0~~~~0~~~~\overset{\text{\normalsize 0}}{1}~~~~0~~~~0~~~~1~~~~0$ \\

\hline

\end{longtable}

\begin{longtable}{|l|c|}
\caption{Rigid nilpotent orbits in $F_4$.}\label{tab:rigidF4}
\endfirsthead
\hline
\multicolumn{2}{|l|}{\small\slshape Rigid nilpotent orbits in $F_4$.} \\
\hline
\endhead
\hline
\endfoot
\endlastfoot

\hline

label & characteristic \\
\hline
 &
\begin{picture}(20,7)
  \put(-20,0){\circle*{6}}
  \put(0,0){\circle*{6}}
  \put(20,0){\circle{6}}
  \put(40,0){\circle{6}}
  \put(-17,0){\line(1,0){14}}
  \put(2,2){\line(1,0){16}}
  \put(2,-2){\line(1,0){16}}
  \put(23,0){\line(1,0){14}}
\end{picture}

\\
\hline

$A_1$ & 1~~~0~~~0~~~0\\

$\widetilde{A}_1$ & 0~~~0~~~0~~~1\\

$A_1+\widetilde{A}_1$ & 0~~~1~~~0~~~0\\

$A_2+\widetilde{A}_1$ & 0~~~0~~~1~~~0\\

$\widetilde{A}_2+A_1$ & 0~~~1~~~0~~~1\\

\hline

\end{longtable}

\begin{longtable}{|l|c|}
\caption{Rigid nilpotent orbits in $G_2$.}\label{tab:rigidG2}
\endfirsthead
\hline
\multicolumn{2}{|l|}{\small\slshape Rigid nilpotent orbits in $G_2$.} \\
\hline
\endhead
\hline
\endfoot
\endlastfoot

\hline

label & characteristic \\

&

  \begin{picture}(10,7)
  \put(-8,0){\circle*{6}}
  \put(22,0){\circle{6}}
  \put(-8,-2){\line(1,0){28}}
  \put(-5,0){\line(1,0){24}}
  \put(-8,2){\line(1,0){28}}
\end{picture}                    \\
\hline

\hline

$A_1$ & 1~~0\\

$\widetilde{A}_1$ & 0~~1 \\

\hline

\end{longtable}

The tables with the induced orbits have one row for each sheet.
There are six columns. In the first column we give the label of
the induced orbit corresponding to the sheet (where we use the
same labels as in \cite{colmcgov}). The second and third columns
contain, respectively, the dimension and the weighted Dynkin diagram
$D$ (here called {\em characteristic}) of the induced nilpotent orbit.
The dimensions of the nilpotent orbits are well-known;
they were calculated in \cite{elas}, and are also contained in the tables
of \cite{colmcgov}. The fourth column has the sheet diagram $\D$. The fifth
column contains the rank of the sheet.  This notion is defined as follows.
Let the sheet correspond to $(\ml,Le)$, where $\ml$ is a Levi subalgebra, and
$Le$ a rigid nilpotent orbit in it. Then the rank of the sheet is the dimension
of the center of $\ml$. It is straightforward to see that this equals the
number of labels 2 in the sheet diagram $\D$. Finally, in the last column we
give a representative of the induced orbit, i.e., an element of $\uu(\D)$
with weighted Dynkin diagram $D$. Such a representative $e$ is
given as a sum of positive root vectors, $e=x_{\beta_1}+\cdots +x_{\beta_r}$.
Then to $e$ there corresponds a Dynkin diagram, which is simply the Dynkin
diagram of the roots $\beta_i$.
This diagram has $r$ nodes, and node $i$ is connected
to node $j$ by $\langle \beta_i, \beta_j^\vee\rangle \langle \beta_j,
\beta_i^\vee\rangle =0,1,2,3$ lines. Furthermore, if these scalar products are
positive, then the lines are dotted. Finally, if the root $\beta_i$ is
long, then node $i$ is black. For each
representative we give the corresponding Dynkin diagram,
where each node has a numerical label, which denotes the
position of the corresponding positive root as used by {\sf GAP}4
(\cite{gra14} contains lists of those roots).
In other words, if the
Dynkin diagram of a representative has labels
$i_1,\ldots,i_k$, then the corresponding representative is the sum of the root
vectors corresponding to the $i_j$-th positive root (in the order in
which they appear in {\sf GAP}4) for $1\leq j\leq k$. \par
These representatives have been found as folllows. First of all, for each
nilpotent orbit one or more Dynkin diagrams of representatives are known
(some are described in \cite{gra14}, many have been found by Elashvili).
For an induced nilpotent orbit with weighted Dynkin diagram $D$ and
sheet diagram $\widetilde{D}$ we construct the set $S$ of roots
$\alpha >0$ such that
$\g_\alpha$ is contained in $\uu(\widetilde{D})$. We have written a
simple-minded
program in {\sf GAP}4 that, for a given Dynkin diagram of a representative,
tries to find a subset
of $S$ such that its Dynkin diagram is the given one (basically by trying all
possibilities). We executed this program for all known Dynkin diagrams
of representatives of the nilpotent orbit. In all cases we managed to
find a representative this way. Furthermore, the element found was
shown to be a representative of the nilpotent orbit by checking that its
weighted Dynkin diagram was equal to $D$.

We note that in our tables the first three columns contain information
relative to the induced orbit, wheras the last three columns contain
information about the sheet. On some occasions it happens that a nilpotent
orbit is induced in more than one way (i.e., it occurs in more than one
sheet). In these cases we have not repeated the information in the first
three columns; instead we have grouped the rows in the last three columns
together by using a curly brace.



\section{Concluding Remarks}

\begin{enumerate}[$5.1$]

\item We remark that from the tables several things can be read off.
For example, a sheet is said to be a {\em Dixmier sheet} if it contains
semisimple elements. We recall that an orbit which is induced
from the zero orbit of a Levi subalgebra is called a
{\em Richardson orbit}. In other words, for a Richardson orbit
$Ge$ there exists a parabolic subalgebra $\p\subset \g$, with nilradical
$\n$ such that $Ge\cap \n$ is dense in $\n$. In this case elements of
$Ge\cap  \n$ are called {\em Richardson elements} of $\n$.
It is known (cf. \cite{borho}) that a sheet is
Dixmier if and only if it corresponds to a Richardson orbit.
In other words, a sheet is Dixmier if and only if the sheet diagram
does not contain any labels 1.

\item Let $x\in \g$ be nilpotent. A parabolic subgroup $P\subset G$ is
said to be a {\em polarization} of $x$ if $x$ is a Richarson element
of the nilradical of the Lie algebra $\p$ of $P$. In \cite{hesselink} and
\cite{kempken} all polarizations of the Richardson orbits are determined,
in Lie algebras of classical type. Our tables give all polarizations of
the Richardson orbits of the Lie algebras of exceptional type.

\item Furthermore, we note that the dimension of a sheet is easily determined
from our tables. Indeed, from \cite{borhokraft}, \S 5.7, Korollar (c), it
follows that the dimension of the sheet is equal to the dimension of
the induced orbit plus the rank of the sheet. So we get the dimension
of the sheet by adding the numbers in the second and fifth columns.

\item
In \cite{carter5}, \cite{cartelk} the diagram of a nilpotent element is
called admissible if
\begin{itemize}
\item the roots corresponding to the nodes are linearly independent,
\item every cycle in the diagram has an even number of nodes.
\end{itemize}
By inspection it can be seen that all diagrams in our tables are
admissible, except in five cases: one for $E_7$, three for $E_8$
and one for $F_4$ (in which cases the diagram has a cycle with an odd
number of nodes).

\item In \cite{baur}, \cite{baur_goodwin} it was shown that every Richardson
orbit in a Lie algebra of classical type (over a field of good characteristic)
has a representative $x$, lying in the nilradical of a parabolic
subalgebra, with
$$ x = \sum_{\alpha\in\Gamma} x_\alpha,$$
where $\Gamma \subset\Phi$. Furthermore, it was shown that the
representative $x$ can be chosen such that the size of $\Gamma$ is equal to
$\rank \g$ minus the dimension of a maximal torus of $C_{\g}(x)$
(which is the minimal size $\Gamma$ can have). By going through the tables
given here it is readily verified that this same statement holds for
all induced nilpotent orbits (and hence for all Richardson orbits)
of Lie algebras of exceptional type, in characteristic $0$. We believe that
this also holds for all induced orbits in Lie algebras of classical
type (and some hand and computer calculations support this). A proof of this
is beyond the scope of the present paper, and will be a theme for further
research.

\end{enumerate}

\section*{Acknowledgement}

Alexander Elashvili was financially supported by the GNSF
(grant \# ST07/3-174). He is also grateful for the hospitality and support
of the University of Bochum (grant SFB/TR12) and the CIRM Trento.

\def\cprime{$'$} \def\Dbar{\leavevmode\lower.6ex\hbox to 0pt{\hskip-.23ex
  \accent"16\hss}D} \def\cprime{$'$} \def\cprime{$'$} \def\cprime{$'$}

\vskip+0.3cm

\centerline{(Received 11.05.2009)}

\vskip+0.2cm

Authors' addresses:

\vskip+0.2cm

Willem A. de Graaf

Dipartimento di Matematica

Universitry of Trento

Via Sommarive 14

38050 Povo (Trento)

Italy

E-mail: degraaf@science.unitn.it

\vskip+0.2cm

Alexander G. Elashvili

A. Razmadze Mathematical Institute

1, M. Alexidze St., Tbilisi 0193

Georgia

E-mail: alela@acnet.ge

\end{document}